\theoremstyle{definition}
\newtheorem{Definition}{Definition}[section]
\newtheorem{Theorem}[Definition]{Theorem}
\newtheorem{Corollary}[Definition]{Corollary}
\newtheorem{Lemma}[Definition]{Lemma}
\newtheorem{Remark}[Definition]{Remark}
\newtheorem{Proposition}[Definition]{Proposition}
\newtheorem{Construction}[Definition]{Construction}
\title{A multisymplectic manifold not covered by Darboux charts}
\author{Leonid Ryvkin}
\address{Leonid Ryvkin, Fakult\"at f\"ur Mathematik, Ruhr-Universit\"at Bochum, Universit\"atsstr. 150, 44801 Bochum, Germany}
\email{Leonid.Ryvkin@ruhr-uni-bochum.de}
\subjclass[2010]{ 53C15, 53B99, 53D05}
\begin{document}

\maketitle

\begin{abstract}
The Darboux theorem in symplectic geometry implies that any two points in a connected symplectic manifold have neighbourhoods symplectomorphic to each other. The impossibility of such a theorem in the more general multisymplectic framework appears to be, at least, folkloristic, but no explicit counterexample seems to exist in the literature.
 
In this note we provide such an example by constructing multisymplectic three-forms on the connected manifold $\mathbb R^6$, which do not even have constant linear type and therefore can not allow for an atlas consisting of ``Darboux charts''.
\end{abstract}

\section*{Introduction}
One of the most striking facts of symplectic geometry is the Darboux theorem, saying that given a symplectic manifold $(M,\omega)$ of real dimension $2n$ and a point $p\in M$, there exists a chart $(U,\phi)$ of $M$ near $p$, such that $\tilde\omega=(\phi^{-1})^*(\omega|_U)$ is a two-form with constant coefficients on the open set $\phi(U)\subset \mathbb R^{2n}$. In fact, $\phi$ can be chosen in a way, such that $\tilde \omega=\sum_{i=1}^ne^i\wedge e^{n+1}$, where $\{e^1,...,e^{2n}\}$ is the standard basis of $(\mathbb R^{2n})^*$.

Going to multisymplectic forms, i.e. $k$-forms $\alpha$ on a manifold $M$, which are non-degenerate in the sense that the map $TM\to \Lambda^{k-1}T^*M,~v\mapsto \iota_v\omega$ is injective, one can ask if the following ``naive'' generalization of the Darboux theorem holds:

\begin{quote}\emph{
Does every $m$-dimensional multisymplectic manifold $(M,\alpha)$ allow for an atlas consisting of ``Darboux charts'' $(U,\phi)$, such that $(\phi^{-1})^*(\omega|_U)$ has constant coefficients on $\phi(U)\subset \mathbb R^m$?}
\end{quote}

The answer to this question is no, which is certainly known in the field (cf., e.g., \cite[Remark 31]{MR2253159}), but we could not pinpoint a reference explictly exhibiting a counterexample. In this short note we construct for degree $k=3$ a counterexample to the above question on $\mathbb R^6$.\\

The first section recapitulates the fundamental facts about the action of $GL(V)$ on the non-degenerate elements of $\Lambda^3V^*$ for a six-dimensional real vector space $V$, whereas in the second section we construct a class of multisymplectic three-forms on $\mathbb R^6$ not allowing an atlas of Darboux charts.\\

If the necessary condition of locally constant linear type and additional further constraints are imposed on a multisymplectic manifold, restricted Darboux-like theorems can be found in the literature (cf., e.g., \cite{MR1694063, MR962194, MR801210,MR3011894} and the internet source \cite{126197}).\\

{\bf  Acknowledgements.}  The author thanks Tilmann Wurzbacher for posing the problem considered in this article and for useful discussions.
\section{Equivalence classes of non-degenerate three-forms in dimension six}
On a finite-dimensional real vector space $V$ an alternating form $\alpha\in \Lambda^kV^*$ is called \emph{non-degenerate} if the map $V\to \Lambda^{k-1}V^*, v\mapsto \iota_v\alpha$ is injective. Then $(V,\alpha)$ is called a \emph{$(k{-}1)$-plectic} (or \emph{multisymplectic}) vector space. Two $(k{-}1)$-plectic vector spaces $(V,\alpha)$, $(\tilde V,\tilde \alpha)$ of the same dimension are said to have \emph{the same linear type}, if there exists a linear isomorphism $\psi:V\to \tilde V$, such that $\psi^*\tilde \alpha=\alpha$. Fixing any linear isomorphism $\psi:V \to \tilde V$, this is the case if and only if $\alpha$ and $\psi^*\tilde \alpha$ lie in the same orbit of the $GL(V)$-action on $\Lambda^kV^*$.\\

Obviously non-degenerate 1-forms  can only occur on vector spaces of dimensions 0 and 1, so the case $k=1$ will be excluded in the sequel.\\

 In the case $k=2$ a non-degenerate form $\alpha$ exists, if and only if $dim_\mathbb RV$ is even. Moreover, by the symplectic basis theorem, for any basis $\{e^1,...,e^{2n}\}$ of $V^*$, there exists a linear isomorphism $L\in GL(V)$, such that $L^*\alpha=\sum_{j=1}^n e^j\wedge e^{j+n}$. Especially all non-degenerate two-forms form a single $GL(V)$-orbit inside $\Lambda^2V^*$, i.e. for a fixed even-dimensional vector space there exists only one linear type of symplectic forms.\\

In stark contrast to the symplectic case, for higher degree forms there can be many non-degenerate $GL(V)$-orbits. For $k=3$ multiple non-degenerate orbits exist, if and only if the dimension of $V$ is greater or equal to six (cf. \cite{MR0286119, MR691457}). Let us recall the six-dimensional case in more detail.

\begin{Construction}
Let $V$ be a six-dimensional real vector space and $\alpha\in \Lambda^3V^*$ non-degenerate. Regard the map $J_\alpha:V\to \Lambda^5V^*, J_\alpha(v)=(\iota_v\alpha)\wedge \alpha$. As $\Lambda^5V^*$ is naturally $GL(V)$-equivariantly isomorphic to $V\otimes \Lambda^6V^*$, we can interpret $J_\alpha$ as an element of $End(V)\otimes \Lambda^6V^*$ and consequently define $J_\alpha^2\in End(V)\otimes (\Lambda^6V^*)^{\otimes 2}$ and $trace(J_\alpha^2)\in (\Lambda^6V^*)^{\otimes 2}$.
\end{Construction}

\begin{Theorem}\cite{MR2253159,MR1863733}\label{normalf6}
Let $V$ be a six-dimensional real vector space, $\Omega\in\Lambda^6V^*\backslash\{0\}$ a volume form and $\{e^1,...,e^6\}$ an ordered basis of $V^*$. Let $\alpha \in \Lambda^3V^*$ be non-degenerate. Then there is a unique scalar $\lambda_\alpha\in \mathbb R$, such that $trace(J_\alpha^2)=\lambda_\alpha\cdot(\Omega\otimes \Omega)\in(\Lambda^6V^*)^{\otimes 2}$. The non-vanishing and sign of $\lambda_\alpha$ do  not depend on the choice of volume form and we have:

\begin{enumerate}[(i)]
	\item there exists $g\in GL(V)$ such that $g^*(\alpha)=\alpha_{(i)}$ if and only if $\lambda_\alpha>0$, 
	\item there exists $g\in GL(V)$ such that $g^*(\alpha)=\alpha_{(ii)}$ if and only if $\lambda_\alpha<0$, 
	\item there exists $g\in GL(V)$ such that $g^*(\alpha)=\alpha_{(iii)}$ if and only if $\lambda_\alpha=0$, 
\end{enumerate}
where 
\begin{align*}
&\alpha_{(i)}=e^1\wedge e^2\wedge e^3+ e^4\wedge e^5\wedge e^6\\
&\alpha_{(ii)}=e^1\wedge e^3\wedge e^5 - e^1\wedge e^4\wedge e^6 - e^2\wedge e^3\wedge e^6-e^2\wedge e^4\wedge e^5\\
&\alpha_{(iii)}=e^1\wedge e^5\wedge e^6 - e^2\wedge e^4\wedge e^6 + e^3\wedge e^4\wedge e^5.
\end{align*}
\end{Theorem}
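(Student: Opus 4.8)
I would first dispatch the easy parts — well-definedness of $\lambda_\alpha$, independence of its sign from $\Omega$, and $GL(V)$-invariance of the sign — and then reduce everything else to the classical linear classification. Throughout, write $J_\alpha=\widehat J_\alpha\otimes\Omega$ with $\widehat J_\alpha\in\operatorname{End}(V)$ (the splitting depending on $\Omega$), so that $\operatorname{trace}(J_\alpha^2)=\operatorname{trace}(\widehat J_\alpha^{\,2})\cdot\Omega\otimes\Omega$ and hence $\lambda_\alpha=\operatorname{trace}(\widehat J_\alpha^{\,2})$. Since $\dim_{\mathbb R}\Lambda^6V^*=1$ the element $\Omega\otimes\Omega$ is nonzero, so $\lambda_\alpha$ is uniquely determined. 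Replacing $\Omega$ by $c\Omega$ ($c\neq0$) replaces $\widehat J_\alpha$ by $c^{-1}\widehat J_\alpha$ and hence $\lambda_\alpha$ by $c^{-2}\lambda_\alpha$, so the vanishing of $\lambda_\alpha$ and, when $\lambda_\alpha\neq0$, its sign are independent of $\Omega$. For $GL(V)$-invariance of the sign: given $g\in GL(V)$ and $\beta=g^{*}\alpha$, the identity $\iota_v(g^{*}\alpha)=g^{*}(\iota_{gv}\alpha)$ yields $J_\beta=g^{*}\circ J_\alpha\circ g$ as maps $V\to\Lambda^5V^*$; transporting this through the canonical $GL(V)$-equivariant isomorphism $\Lambda^5V^*\cong V\otimes\Lambda^6V^*$ and using $g^{*}\Omega=\det(g)\,\Omega$ gives $\widehat J_\beta=\det(g)\,g^{-1}\widehat J_\alpha\,g$, so that $\lambda_\beta=\operatorname{trace}(\widehat J_\beta^{\,2})=\det(g)^2\operatorname{trace}(\widehat J_\alpha^{\,2})=\det(g)^2\lambda_\alpha$. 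As $\det(g)^2>0$, $\lambda_\beta$ and $\lambda_\alpha$ have the same sign and vanish together.

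Next I would compute $\lambda$ on the three model forms, taking $\Omega=e^1\wedge\cdots\wedge e^6$. A short contraction computation gives $\widehat J_{\alpha_{(i)}}=\operatorname{diag}(1,1,1,-1,-1,-1)$, hence $\widehat J_{\alpha_{(i)}}^{\,2}=\mathrm{Id}$ and $\lambda_{\alpha_{(i)}}=6>0$. Rewriting $\alpha_{(ii)}=\operatorname{Re}\bigl((e^1{+}ie^2)\wedge(e^3{+}ie^4)\wedge(e^5{+}ie^6)\bigr)$ makes it transparent that $\widehat J_{\alpha_{(ii)}}$ is twice a complex structure, hence $\widehat J_{\alpha_{(ii)}}^{\,2}=-4\,\mathrm{Id}$ and $\lambda_{\alpha_{(ii)}}=-24<0$. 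Finally $\widehat J_{\alpha_{(iii)}}$ comes out as a rank-two nilpotent endomorphism, hence $\widehat J_{\alpha_{(iii)}}^{\,2}=0$ and $\lambda_{\alpha_{(iii)}}=0$; along the way one also checks directly that $\alpha_{(iii)}$ is non-degenerate. I would then invoke the classical linear classification of non-degenerate three-forms on a six-dimensional real vector space (see \cite{MR1863733,MR2253159}): every such $\alpha$ is $GL(V)$-equivalent to one of $\alpha_{(i)}$, $\alpha_{(ii)}$, $\alpha_{(iii)}$. Granting this, the theorem follows: a non-degenerate $\alpha$ is equivalent to exactly one $\alpha_{(j)}$, by the previous paragraph $\operatorname{sign}(\lambda_\alpha)=\operatorname{sign}(\lambda_{\alpha_{(j)}})$, and comparison with the three computed values $6,-24,0$ gives all three asserted equivalences (and, incidentally, that the three orbits are pairwise distinct).

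The substance — and the point I expect to be the main obstacle — is the classification just invoked. The standard route fixes $\Omega$, considers $K_\alpha:=\widehat J_\alpha\in\operatorname{End}(V)$, and establishes two facts: $\operatorname{trace}(K_\alpha)=0$, and the $GL(V)$-equivariant polynomial identity $K_\alpha^2=\tfrac16\operatorname{trace}(K_\alpha^2)\,\mathrm{Id}$. Given these: if $\lambda_\alpha>0$, then $K_\alpha$ is a nonzero multiple of a trace-zero involution, so its $\pm1$-eigenspaces are three-dimensional and $\alpha$ is a sum of volume forms on them, i.e. $\alpha_{(i)}$; if $\lambda_\alpha<0$, then $K_\alpha$ is a multiple of a complex structure $J$ and $\alpha$ is the real part of a (nonzero, by non-degeneracy) complex volume form on $(V,J)$, i.e. $\alpha_{(ii)}$; if $\lambda_\alpha=0$, then non-degeneracy forces $K_\alpha\neq0$ while $K_\alpha^2=0$, and a more delicate analysis of the flag $\operatorname{im}K_\alpha\subseteq\ker K_\alpha$ recovers $\alpha_{(iii)}$ — this last case being the most technical. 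Since all of this is classical I would cite it rather than reproduce it in full; the only genuinely new computation needed for the theorem as stated is the evaluation of $\lambda$ on the three model forms carried out above.
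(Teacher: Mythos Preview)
The paper does not prove this theorem at all: it is stated with citations \cite{MR2253159,MR1863733} and used as input for the rest of the note. There is therefore no ``paper's own proof'' to compare against; your proposal goes well beyond what the paper does.

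That said, your sketch is sound and follows the standard Hitchin approach. The arguments for uniqueness of $\lambda_\alpha$, independence of its sign from $\Omega$, and $GL(V)$-invariance via $\widehat J_{g^*\alpha}=\det(g)\,g^{-1}\widehat J_\alpha\,g$ are correct. The computations of $\lambda$ on the three model forms are right, and your identification $\alpha_{(ii)}=\operatorname{Re}\bigl((e^1{+}ie^2)\wedge(e^3{+}ie^4)\wedge(e^5{+}ie^6)\bigr)$ checks out. One small slip: for $\alpha_{(iii)}$ the endomorphism $\widehat J_{\alpha_{(iii)}}$ has rank \emph{three}, not two (it kills $e_1,e_2,e_3$ and sends $e_4,e_5,e_6$ to nonzero multiples of $e_1,e_2,e_3$ respectively); what you mean is that it is nilpotent of index two, i.e.\ $\widehat J_{\alpha_{(iii)}}^{\,2}=0$, which is what you actually use. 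Your honest acknowledgement that the classification itself (via $K_\alpha^2=\tfrac16\operatorname{trace}(K_\alpha^2)\,\mathrm{Id}$ and the ensuing case analysis) is the substantive input, to be cited rather than reproduced, is exactly the right call --- and is precisely what the paper does too.
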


\section{A 2-plectic structure on $\mathbb R^6$ with varying linear type}

\begin{Definition} A smooth manifold $M$ equipped with a closed differential form $\omega\in \Omega^k_{cl}(M)$ is called \emph{multisymplectic of degree $k$} or \emph{$(k{-}1)$-plectic} if $\omega_p\in \Lambda^kT^*_pM$ is non-degenerate for all $p\in M$.
\end{Definition}

For the naive generalization of the Darboux theorem from symplectic geometry (cf. \cite{MR0182927}) to hold in multisymplectic geometry, each point in any multisymplectic manifold $(M,\omega)$ must necessarily have a neighbourhood $U$, such that $\omega_p$ has the same linear type for all $p$ in $U$.\\

We will now construct a counter-example to the above by exhibiting a multisymplectic manifold $(M,\omega)$ with variable linear types in the ``smallest'' possible situation, i.e., $dim_\mathbb RM=6$ and $k=3$.

\begin{Lemma}
Writing $dx^{ijk}$ for $dx^i\wedge dx^j\wedge dx^k$, the form
\[\alpha=\alpha^t= dx^{135} -dx^{146}-dx^{236} + t \cdot dx^{245}\in \Lambda^3{\mathbb R ^6}^*\]
is non-degenerate for all $t \in \mathbb R$ and 
\[ trace(J_{\alpha^t}^2)=24\cdot t\cdot (dvol)^{\otimes 2}, \]
where $dvol= dx^1\wedge dx^2\wedge dx^3\wedge dx^4\wedge dx^5\wedge dx^6$.
\end {Lemma}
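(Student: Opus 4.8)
The plan is to verify the two assertions in turn: multisymplecticity by a direct linear-independence argument, and the trace identity by reducing to a trace of endomorphisms and then computing a $6\times 6$ matrix explicitly.

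\emph{Non-degeneracy.} First I would contract $\alpha^t$ with each of the standard coordinate vector fields $\partial_1,\dots,\partial_6$ on $\mathbb R^6$. Each contraction is a sum of exactly two basis two-forms, e.g. $\iota_{\partial_1}\alpha^t=dx^{35}-dx^{46}$, $\iota_{\partial_5}\alpha^t=dx^{13}+t\cdot dx^{24}$, and so on. The key observation is that the twelve monomials $dx^{ab}$ occurring in $\iota_{\partial_1}\alpha^t,\dots,\iota_{\partial_6}\alpha^t$ are pairwise distinct (they exhaust all of $\{dx^{ij}\}_{i<j}$ apart from $dx^{12}$, $dx^{34}$, $dx^{56}$). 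Hence for any scalars $c_1,\dots,c_6$ the monomial appearing in $\iota_{\partial_i}\alpha^t$ that is carried by no other contraction forces $c_i=0$ in $\sum_i c_i\,\iota_{\partial_i}\alpha^t=0$; so the six two-forms are linearly independent for \emph{every} $t\in\mathbb R$, including $t=0$, and $v\mapsto\iota_v\alpha^t$ is injective. Thus $\alpha^t$ is multisymplectic for all $t$.

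\emph{Reduction of the trace computation.} Following the Construction preceding Theorem \ref{normalf6}, fix $\Omega=dvol$ and use the canonical $GL$-equivariant isomorphism $\Lambda^5{\mathbb R^6}^*\cong\mathbb R^6\otimes\Lambda^6{\mathbb R^6}^*$, $v\otimes\Omega\mapsto\iota_v\Omega$, to identify $J_{\alpha^t}$ with $K\otimes dvol$, where $K=K^t\in End(\mathbb R^6)$ is characterised by $(\iota_v\alpha^t)\wedge\alpha^t=\iota_{K(v)}\,dvol$ for all $v$. Then $J_{\alpha^t}^2=K^2\otimes(dvol)^{\otimes2}$ and $trace(J_{\alpha^t}^2)=trace(K^2)\cdot(dvol)^{\otimes2}$, so it remains to show $trace(K^2)=24t$. (Replacing $K$ by $-K$ would not affect $K^2$, so the conventional overall sign in $\Lambda^5{\mathbb R^6}^*\cong\mathbb R^6$ is irrelevant to the final answer.)

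\emph{Computing $K$.} For each $i$ I would evaluate $(\iota_{\partial_i}\alpha^t)\wedge\alpha^t\in\Lambda^5{\mathbb R^6}^*$. Since $\iota_{\partial_i}\alpha^t$ has only two monomials and wedging with any monomial of $\alpha^t$ sharing an index gives zero, each product collapses to at most two terms, each of which, after reordering the five differentials, is a multiple of some $\iota_{\partial_j}\,dvol=\pm dx^{1\dots\widehat{j}\dots6}$. The outcome is that $K$ is block-diagonal for the splitting $\mathbb R^6=\langle\partial_1,\partial_2\rangle\oplus\langle\partial_3,\partial_4\rangle\oplus\langle\partial_5,\partial_6\rangle$, the three $2\times2$ blocks being $\left(\begin{smallmatrix}0&-2t\\-2&0\end{smallmatrix}\right)$, $\left(\begin{smallmatrix}0&-2t\\-2&0\end{smallmatrix}\right)$, $\left(\begin{smallmatrix}0&2\\2t&0\end{smallmatrix}\right)$; each of these squared equals $4t\cdot\mathrm{Id}_2$, so $trace(K^2)=3\cdot(4t+4t)=24t$, which proves the claim. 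The only delicate point in the whole argument is the sign bookkeeping when reordering wedge products of five of the $dx^i$ and when applying the identification $\iota_{\partial_j}\,dvol=(-1)^{j-1}dx^{1\dots\widehat{j}\dots6}$; by the remark above a uniform sign error there would be harmless, but the individual matrix entries still have to be tracked carefully.
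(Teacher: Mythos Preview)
Your proposal is correct and follows essentially the same route as the paper: the paper computes the six contractions $\iota_{e_i}\alpha$, observes that the occurring two-form monomials are pairwise distinct to deduce non-degeneracy, then evaluates $J_\alpha(e_i)=(\iota_{e_i}\alpha)\wedge\alpha$ and writes $J_\alpha$ as exactly the $6\times 6$ matrix (tensor $dvol$) whose three $2\times 2$ diagonal blocks you list, concluding $J_\alpha^2=4t\cdot\mathrm{id}\otimes(dvol)^{\otimes 2}$. Your added remark that a global sign ambiguity in the identification $\Lambda^5{\mathbb R^6}^*\cong\mathbb R^6\otimes\Lambda^6{\mathbb R^6}^*$ is harmless for $K^2$ is a nice safeguard not made explicit in the paper.
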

\begin {proof}
We write $e_i$ for the standard basis vectors of $\mathbb R^6$ and $dx^{ij}$ instead of $dx^i\wedge dx^j$. We have

\begin{minipage}{0.5\textwidth}
\begin{align*}
&\iota_{e_1}\alpha=dx^{35} - dx^{46}\\
&\iota_{e_2}\alpha=-dx^{36} +t dx^{45}\\
&\iota_{e_3}\alpha=-dx^{15} + dx^{26}\\
\end{align*}
\end{minipage}\begin{minipage}{0.5\textwidth}
\begin{align*}
&\iota_{e_4}\alpha=dx^{16} - t dx^{25}\\
&\iota_{e_5}\alpha=dx^{13} +t dx^{24}\\
&\iota_{e_6}\alpha=-dx^{14} - dx^{23}.\\
\end{align*}
\end{minipage}

As no pair of indices above appears twice, no non-trivial linear combination of the above 2-forms will yield zero, i.e.  $\alpha=\alpha^{t}$ is non-degenerate for all $t $. Next we calculate $J_\alpha$:

\begin{minipage}{0.4\textwidth}
\begin{align*}
&J_\alpha (e_1)=-dx^{35146}-dx^{46135}=2dx^{13456}=-2\iota_{e_2}dvol\\
&J_\alpha (e_2)=-2t \iota_{e_1}dvol\\
&J_\alpha (e_3)=-2\iota_{e_4}dvol\\
\end{align*}
\end{minipage}
\begin{minipage}{0.35\textwidth}
\begin{align*}
&J_\alpha (e_4)=-2 t\iota_{e_3}dvol\\
&J_\alpha (e_5)=2t\iota_{e_6}dvol\\
&J_\alpha (e_6)=2\iota_{e_5}dvol.\\
\end{align*}
\end{minipage}

Applying the canonical isomorphism $\Lambda^5\mathbb R^6\cong \mathbb R^6\otimes \Lambda^6(\mathbb R^6)^*$  we get
\[
J_\alpha=\begin{pmatrix}
0&-2t &0&0&0&0\\
-2&0&0&0&0&0\\
0&0&0&-2t &0&0\\
0&0&-2&0&0&0\\
0&0&0&0&0&2\\
0&0&0&0&2t&0\\
\end{pmatrix}\otimes dvol\in End(\mathbb R^6)\otimes \Lambda^6(\mathbb R^6).
\]
Consequently $(J_\alpha)^2=4t\cdot id_{\mathbb R^6}\otimes (dvol)^{\otimes 2}$, which finishes the proof.
\end {proof}

The above lemma shows, that $\{\alpha^t|t\in\mathbb R\}$ contains elements of all the linear types discussed in Theorem \ref{normalf6}. We now use this fact to construct a multisymplectic structure on $\mathbb R^6$, which has different linear types at different points.

\begin{Proposition} \label{prop}
Let $f:\mathbb R^6\to \mathbb R$ be a smooth function. The manifold $\mathbb R^6$ equipped with the form 
\[\omega^f= dx^{135} -dx^{146}-dx^{236} + f(x)\cdot dx^{245} \]
 is multisymplectic of degree 3, if $f$ only depends on $x_2,x_4$ and $x_5$. For $p$ in $\mathbb R^6$, the form $\omega_p^f$ has the linear type of $\alpha_{(i)}$ if $f(p)>0$, $\alpha_{(ii)}$ if $f(p)<0$ and $\alpha_{(iii)}$ if $f(p)=0$.
\end{Proposition}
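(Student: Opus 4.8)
The plan is to reduce the statement to the two facts already at our disposal: the pointwise computation carried out in the previous Lemma, and the normal-form Theorem~\ref{normalf6}. The proposition really consists of two independent claims — that $\omega^f$ is a genuine multisymplectic form, and that its linear type at each point is governed by the sign of $f$ — and I would establish them in that order.

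First, for multisymplecticity I must verify that $\omega^f$ is closed and pointwise non-degenerate. Closedness is a one-line computation: the only non-constant coefficient is $f$, so $d\omega^f = df \wedge dx^{245} = (\partial_1 f\, dx^1 + \partial_3 f\, dx^3 + \partial_6 f\, dx^6)\wedge dx^{245}$, and since the three resulting monomials $dx^{1245}$, $dx^{3245}$, $dx^{6245}$ are linearly independent, $d\omega^f = 0$ holds exactly when $\partial_1 f = \partial_3 f = \partial_6 f = 0$, i.e.\ precisely when $f$ depends only on $x_2, x_4, x_5$, which is the hypothesis. For non-degeneracy, observe that at any $p \in \mathbb R^6$ the value $\omega^f_p \in \Lambda^3 T_p^*\mathbb R^6$ is literally the form $\alpha^{t}$ of the Lemma with the constant $t$ replaced by the number $t = f(p)$; the Lemma asserts $\alpha^t$ is non-degenerate for every real $t$, so $\omega^f_p$ is non-degenerate for all $p$.

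Second, for the identification of the linear type, the point is that the operator $J_\alpha$, its square, the trace, and the scalar $\lambda_\alpha$ of Theorem~\ref{normalf6} are all pointwise linear-algebraic invariants of the value of the form (and $dvol_p$ is a nonzero volume form on $T_p\mathbb R^6$). Hence the Lemma applies verbatim at each point with $t = f(p)$, giving $trace\big(J^2_{\omega^f_p}\big) = 24\, f(p)\cdot (dvol_p)^{\otimes 2}$, so that $\lambda_{\omega^f_p} = 24\, f(p)$ with respect to $dvol$. Theorem~\ref{normalf6} then yields the three cases directly: $\omega^f_p$ has the linear type of $\alpha_{(i)}$, of $\alpha_{(ii)}$, or of $\alpha_{(iii)}$ according as $f(p) > 0$, $f(p) < 0$, or $f(p) = 0$.

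I do not expect a genuine obstacle: the substantive work was done in the Lemma and the Theorem, and what remains is bookkeeping — checking that the closedness constraint $df \wedge dx^{245} = 0$ is equivalent to the stated coordinate dependence, and noting that all the invariants involved are pointwise, so that the vector-space statements transfer to the manifold without modification. It is worth remarking in passing that the hypothesis on $f$ is used only for closedness; non-degeneracy and the linear type at each point hold for an arbitrary smooth $f$.
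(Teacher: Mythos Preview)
Your proof is correct, and it is essentially the argument the paper intends: the paper actually states the proposition without a formal proof environment, treating it as an immediate consequence of the Lemma and Theorem~\ref{normalf6}, and your write-up spells out precisely those two steps (closedness from $d\omega^f = df\wedge dx^{245}$, and the pointwise identification $\omega^f_p=\alpha^{f(p)}$ to read off non-degeneracy and $\lambda_{\omega^f_p}=24f(p)$). Your additional remark that the hypothesis on $f$ is needed only for closedness is accurate and worth keeping.
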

 \enlargethispage{10px}
\begin{Corollary}
The multisymplectic manifold $(\mathbb R^6,\omega^f)$, with $f(x)=x_2$ does not admit any neighbourhood of $0$, on which the linear type of $(T_p\mathbb R^6,\omega_p^f)$ is constant. Notably, there is no chart $(U,\phi)$ of $\mathbb R^6$ near 0, such that $(\phi^{-1})^*(\omega^f)$ is a three-form with constant coefficients on $\phi(U)\subset \mathbb R^6$. Furthermore the group 
\[\text{Diff}_{\omega^f}(\mathbb R^6)=\{\Phi:\mathbb R^6\to \mathbb R^6~|~ \Phi \text{ is a diffeomorphism and }\Phi^*(\omega^f)=\omega^f \}\]
does not act transitively on $\mathbb R^6$.
\end{Corollary}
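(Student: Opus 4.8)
The plan is to deduce the Corollary directly from Proposition \ref{prop} applied to $f(x)=x_2$, together with Theorem \ref{normalf6}. First I would observe that with $f(x)=x_2$ the hypothesis of the Proposition is satisfied (the function depends only on $x_2$, hence only on $x_2,x_4,x_5$), so $(\mathbb R^6,\omega^f)$ is indeed $2$-plectic, and at a point $p$ the linear type of $\omega^f_p$ is that of $\alpha_{(i)}$, $\alpha_{(ii)}$ or $\alpha_{(iii)}$ according to the sign of $p_2$. Now take any neighbourhood $U$ of $0$: it contains points with $x_2>0$ and points with $x_2<0$, so $\omega^f$ takes at least two (in fact all three) distinct linear types on $U$. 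Since linear type is, by definition, a $GL$-invariant and hence cannot change under a linear isomorphism, this proves the first assertion.

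For the statement about charts, I would argue by contraposition: if $(U,\phi)$ were a chart near $0$ with $(\phi^{-1})^*(\omega^f)$ having constant coefficients on $\phi(U)$, then at any two points $p,q\in U$ the differential $d\phi$ would provide linear isomorphisms $T_p\mathbb R^6\to\mathbb R^6\to T_q\mathbb R^6$ identifying $\omega^f_p$ and $\omega^f_q$ with the same constant-coefficient form on $\mathbb R^6$, hence $\omega^f_p$ and $\omega^f_q$ would have the same linear type — contradicting the previous paragraph. (One should note here that $\lambda_{\omega^f_p}$ is, up to the square of a volume form, the smooth function $24\,x_2$ by the Lemma, which makes the sign change completely explicit and reconfirms non-constancy of linear type.) The key conceptual point, which is really the content of the whole note, is the implication ``constant-coefficient local model $\Rightarrow$ locally constant linear type''; everything else is bookkeeping.

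Finally, for the transitivity statement I would take $p$ with $p_2>0$ and $q$ with $q_2<0$ and suppose $\Phi\in\mathrm{Diff}_{\omega^f}(\mathbb R^6)$ with $\Phi(p)=q$. Then $d\Phi_p:T_p\mathbb R^6\to T_q\mathbb R^6$ satisfies $(d\Phi_p)^*(\omega^f_q)=\omega^f_p$, so $\omega^f_p$ and $\omega^f_q$ have the same linear type; but by Proposition \ref{prop} they have the types of $\alpha_{(i)}$ and $\alpha_{(ii)}$ respectively, which are distinct by Theorem \ref{normalf6} (the invariant $\lambda$ has opposite signs). This contradiction shows no such $\Phi$ exists, so $\mathrm{Diff}_{\omega^f}(\mathbb R^6)$ cannot act transitively.

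I do not expect any genuine obstacle here: the Corollary is a formal consequence of the Proposition and the classification theorem, and the only thing to be careful about is phrasing the ``pullback by a chart preserves linear type pointwise'' step precisely, since that is the crux distinguishing the multisymplectic situation from the symplectic one. The optional remark tying $\lambda_{\omega^f_p}$ to the explicit function $24\,x_2$ via the Lemma is worth including because it makes the failure of the Darboux property quantitatively transparent rather than merely existential.
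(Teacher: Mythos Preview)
Your proposal is correct and follows essentially the same approach as the paper: deduce non-constancy of linear type near $0$ directly from Proposition~\ref{prop} (since any neighbourhood of $0$ meets both $\{x_2>0\}$ and $\{x_2<0\}$), and rule out transitivity of $\mathrm{Diff}_{\omega^f}(\mathbb R^6)$ by noting that a symmetry $\Phi$ would force $\omega^f_p$ and $\omega^f_{\Phi(p)}$ to share a linear type via $d\Phi_p$. The paper's proof is terser---it picks the concrete points $0$ and $e_2$ rather than generic $p,q$ and omits your explicit contrapositive for the chart statement---but the logic is identical.
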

\begin{proof}
The first claim follows directly from Proposition \ref{prop}. Assuming that $\text{Diff}_{\omega^f}(\mathbb R^6)$ acts transitively, it especially has to contain an element $\Phi$ mapping $0$ to $e_2=(0,1,0,0,0,0)$. Then $(D_0\Phi)^*(\omega^f_{e_2})=\omega^f_0$, i.e.,  $\omega^f_0$ must have the same linear type as $\omega^f_{e_2}$, contradicting the choice of $f$.
\end{proof}

\begin{Remark}
In terms of the introduction, the multisymplectic manifold $(\mathbb R^6, \omega^f)$ does not allow for a Darboux chart near 0.
\end{Remark}

\begin{Remark}
Compact examples can be constructed by choosing $f$ to be periodic and taking the quotient with respect to a group action. For instance, we can set $f(x)=sin(2\pi x_2)$. Then $\omega^f$ descends to a multisymplectic form on $T^6=\mathbb R^6/\mathbb Z^6$, the 6-dimensional compact torus, which still has different linear types in different points.
\end{Remark}

\bibliographystyle{habbrv}
\bibliography{document}
\end{document}